\documentclass[11pt]{amsart}

\makeindex

\usepackage[all, cmtip]{xy}
\usepackage{amscd,amssymb, amsmath, wasysym}
\usepackage{amsfonts}
\usepackage[pdftex]{graphicx}
\usepackage{multirow}
\usepackage[usenames,dvipsnames,svgnames,table]{xcolor}
\usepackage{enumerate}

\usepackage[margin=1.15in]{geometry}
\usepackage{mathrsfs}    
\usepackage{amsthm}     
\usepackage{eucal}      
\usepackage{latexsym}   
\usepackage{verbatim}   
\usepackage[all]{xy}     

\usepackage[bookmarks=true,bookmarksnumbered=true,
 pdftitle={},
 pdfsubject={},
 pdfauthor={},
 pdfkeywords={TeX; dvipdfmx; hyperref; color;},
 colorlinks=true, linkcolor=BrickRed, citecolor=RoyalBlue]
 {hyperref}

\sloppy\pagestyle{plain}%
\oddsidemargin=0cm \evensidemargin=0cm%
\topmargin=-25pt \textwidth=16cm \textheight=23cm%
\pagestyle {headings}%

\theoremstyle{definition}

\theoremstyle{remark}

\theoremstyle{corollary}

\theoremstyle{theorem}

\theoremstyle{corollary}

\newtheorem{theorem}{Theorem}[section]
\newtheorem{lemma}[theorem]{Lemma}
\newtheorem{proposition}[theorem]{Proposition}

\theoremstyle{corollary}

\theoremstyle{definition}

\theoremstyle{remark}
\newtheorem{remark}[theorem]{Remark}

\numberwithin{equation}{section}

\newcommand{\Z}{\mathbb{Z}}

\def\P{\mathbb{P}}

\def\reg{\operatorname{reg}}

\def\codim{\operatorname{codim}}


\newcommand{\nZ}{\mathbb{Z}}                     

                     %

\newcommand{\nP}{\mathbb{P}}                     



\newcommand{\sF}{\mathscr{F}}
\newcommand{\sG}{\mathscr{G}}
\newcommand{\sO}{\mathscr{O}}                    

\newcommand{\sI}{\mathscr{I}}                    


\newcommand{\sV}{\mathscr{V}}
\newcommand{\sW}{\mathscr{W}}






\DeclareMathOperator{\Char}{char}                
\DeclareMathOperator{\Coker}{Coker}              






\DeclareMathOperator{\id}{id}                    








\DeclareMathOperator{\Sym}{Sym}                  







\newcommand*{\longsurjrightarrow}{\ensuremath{\joinrel\relbar\joinrel\twoheadrightarrow}}

\newcommand{\suchthat}{\;\ifnum\currentgrouptype=16 \middle\fi|\;}



\input xy
\xyoption{all}

\title[A Castelnuovo-Mumford regularity bound for scrolls]{A Castelnuovo-Mumford regularity bound for scrolls}

\begin{document}

\author{Wenbo Niu}
\address{Department of Mathematical Sciences, University of Arkansas, Fayetteville, AR 72701, USA}
\email{wenboniu@uark.edu}

\author{Jinhyung Park}
\address{School of Mathematics, Korea Institute for Advanced Study, Seoul 02455, Republic of Korea}
\email{parkjh13@kias.re.kr}

\thanks{J.P was partially supported by Basic Science Research Program through the National Research Foundation of Korea (NRF) funded by the Ministry of  Science, ICT and Future Planning (NRF-2016R1C1B2011446).}
\subjclass[2010]{Primary 14N05, 13D02; Secondary 51N35}

\date{\today}

\dedicatory{Dedicated to Professor Lawrence Ein on the occasion of his sixtieth birthday.}

\keywords{Castelnuovo-Mumford regularity, scroll, minimal free resolution of a section module}

\begin{abstract}
Let $X \subseteq \P^r$ be a scroll of codimension $e$ and degree $d$ over a smooth projective curve of genus $g$.
The purpose of this paper is to prove a linear Castelnuovo-Mumford regularity bound that $\reg(X) \leq d-e+1+g(e-1)$. This bound works  over an algebraically closed field of arbitrary characteristic.
\end{abstract}

\maketitle


\section{Introduction}

\noindent Throughout the paper, we work over an algebraically closed field $k$ of arbitrary characteristic.
Let $X \subseteq \P^r$ be a projective variety defined by an ideal sheaf $\sI_{X}$. We say that $X$ is \emph{$m$-regular} if $H^i(\P^r, \sI_{X}(m-i))=0$ for all $i>0$. The minimal such number $m$ is called the \emph{Castelnuovo-Mumford regularity} of $X$ and is denoted by $\reg(X)$. It has attracted considerable attentions in the past thirty years to bound the regularity in terms of geometric or algebraic invariants. One optimal bound has been conjectured by Eisenbud-Goto \cite{EG} that $$\reg(X) \leq \deg X-\codim X+1.$$
Very recently, counterexamples involving singular varieties for the regularity conjecture have been found by McCullough-Peeva \cite{MP}.
However, it is still interesting to study whether this conjecture or a weaker variant holds for important cases, and the conjecture actually has been proven for integral curves by \cite{GLP}, for smooth complex surfaces by \cite{P} and \cite{L}, and certain singular surfaces by \cite{Niu}. Slightly weaker results for lower dimensional smooth varieties in characteristic zero were also obtained in \cite{K1} and \cite{K2}.
As one of the promising cases of the conjecture, nonsingular scrolls of arbitrary dimension were studied in \cite[Theorem 3]{B}, while the proof there contains a miscalculation (see Remark \ref{bertin} for details).
On the other hand, Noma showed that the double point divisor associated to a generic inner projection of a smooth projective variety $X \subseteq \P^r$ is semiample except when $X$ is a scroll, a Roth variety, or the second Veronese surface \cite{N2}, and he proved a weaker bound for the regularity of Roth varieties \cite{N3}. Thus the scroll case is of special interest to us.

Motivated by the work of \cite{GLP} and \cite{B}, we establish a regularity bound for scrolls in this paper.
Precisely, let $C$ be a smooth projective curve of genus $g\geq 0$ and let $E$ be a very ample vector bundle on $C$ of rank $n$ and degree $d$.  The variety $X=\nP(E) \subseteq \P^r=\nP(V)$ embedded by a base-point-free subspace $V \subseteq H^0(\P(E), \sO_{\P(E)}(1))$ is called a \emph{scroll over $C$}.
Note that $X$ is a non-degenerate smooth projective variety of dimension $n$ and degree $d$. The main result is  the following theorem.

\begin{theorem}\label{main1}
Let $X \subseteq \P^r$ be a scroll of codimension $e$ and degree $d$ over a smooth projective curve $C$ of genus $g\geq 0$.  Then one has
 $$\reg(X) \leq d-e+1+g(e-1).$$
\end{theorem}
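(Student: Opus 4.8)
The plan is to show directly that $X$ is $M$-regular for $M = d-e+1+g(e-1)$, i.e. that $H^i(\mathbb P^r,\mathcal I_X(M-i))=0$ for all $i>0$. I would separate this into the single condition $H^1(\mathcal I_X(M-1))=0$ and the higher vanishings $H^i(\mathcal I_X(M-i))=0$ for $i\ge 2$. For the higher vanishings I would cut down by a general hyperplane $H$: since $\mathcal O_X(1)$ restricts to the tautological bundle on each fibre $\mathbb P^{n-1}$, a general $s\in V$ gives a nowhere-vanishing section $\mathcal O_C\hookrightarrow E$ (here $n\ge 2$), so $X\cap H=\mathbb P(E')$ is again a scroll over the same curve $C$, with $E'=E/\mathcal O_C$ of rank $n-1$, degree $d$ and the same codimension $e$. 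The standard comparison lemma then shows that $X$ is $M$-regular as soon as $X\cap H$ is $M$-regular and $H^1(\mathcal I_X(M-1))=0$. Iterating down the fibre dimension reduces the higher vanishings to the base case $n=1$, namely the curve $C\subseteq\mathbb P^{e+1}$ of degree $d$ and codimension $e$, for which $\reg(C)\le d-e+1\le M$ by Gruson--Lazarsfeld--Peskine \cite{GLP}. Thus the whole theorem collapses to the single cohomological statement below, applied to every scroll appearing in the chain.

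\emph{Key vanishing.} For a scroll $X=\mathbb P(E)$ one has $H^1(\mathbb P^r,\mathcal I_X(m))=0$ for all $m\ge d-e+g(e-1)$. Because $H^1(\mathbb P^r,\mathcal O(m))=0$ and $H^0(X,\mathcal O_X(m))=H^0(C,S^mE)$, this is exactly the surjectivity of the multiplication map $\Sym^m V\to H^0(C,S^mE)$. I would attack it through the kernel (syzygy) bundle $M_V$ defined by $0\to M_V\to V\otimes\mathcal O_C\to E\to 0$, a bundle of rank $e+1$ and degree $-d$. Tensoring with $S^{m-1}E$ and taking cohomology shows that $V\otimes H^0(S^{m-1}E)\to H^0(S^mE)$ is surjective once $H^1(C,M_V\otimes S^{m-1}E)=0$; combined with the surjectivity of $\Sym^{m-1}V\to H^0(S^{m-1}E)$ this yields surjectivity at level $m$. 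One therefore gets a double induction: on the rank $n$ (passing from $E$ to the quotient $E'$, which lowers the fibre dimension but preserves $d$, $e$, $g$) and on the degree $m$ (multiplying by the section $s$), the whole scheme being characteristic-free provided the splitting $E\otimes S^{m-1}E\twoheadrightarrow S^mE$ on global sections is replaced by an argument valid in positive characteristic.

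The remaining and decisive point is the vanishing $H^1(C,M_V\otimes S^{m-1}E)=0$ for $m\ge d-e+g(e-1)$. By Serre duality this is the vanishing of $H^0(C,M_V^\vee\otimes S^{m-1}E^\vee\otimes\omega_C)$, which I would control by bounding the maximal degree of a sub-line-bundle of $M_V^\vee$ against the minimal slope of $E$. Dualizing the defining sequence exhibits $M_V^\vee$ as a quotient of the trivial bundle $V^\vee\otimes\mathcal O_C$, hence globally generated, while the very ampleness of $E$ forces every quotient line bundle $E\twoheadrightarrow L$ to have positive degree; the crucial extra input is that such an $L$ corresponds to a section curve $C'\cong C$ of degree $\deg L$ inside $\mathbb P^r$, so a Castelnuovo-type bound relating this degree, the dimension of its linear span, and the genus $g$ prevents $E$ from being too unbalanced. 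Feeding these estimates into Riemann--Roch on $C$ is what produces the correction term: each of the $e-1$ relevant filtration steps contributes a summand $g$, giving the total $g(e-1)$ and the threshold $d-e+g(e-1)$, so that the resulting regularity bound specializes to the Eisenbud--Goto value $d-e+1$ when $g=0$.

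I expect the sharp vanishing $H^1(C,M_V\otimes S^{m-1}E)=0$ with the \emph{exact} constant to be the main obstacle. The difficulty is twofold: first, $E$ need not be semistable, so the naive slope estimate $\mu_{\min}(E)\ge 1$ together with $\mu_{\max}(M_V^\vee)\le d$ is far too crude and must be refined by the genus-dependent Castelnuovo estimates above to locate the true threshold; second, the argument must run in arbitrary characteristic, where symmetric powers are not direct summands of tensor powers and semistability is not preserved under tensor product, so every step that implicitly used a splitting or the stability of $S^kE$ has to be redone by hand. This is precisely the delicate bookkeeping that the proof of \cite[Theorem 3]{B} got wrong, and obtaining the constant $g(e-1)$ rather than something larger is where the real work lies.
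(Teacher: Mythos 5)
Your outer scaffolding coincides with the paper's: a general hyperplane section of $\P(E)$ is again a scroll over $C$ of the same degree and codimension, so Lemma \ref{reglem} reduces the theorem, inductively down to the curve case of \cite{GLP}, to proving that each scroll in the chain is $m$-normal for $m\geq d-e+g(e-1)$, i.e.\ that $\Sym^mV\to H^0(C,S^mE)$ is surjective in that range. The genuine gap is that this normality statement is never established. Three concrete problems: (i) your mechanism (multiplication maps through the kernel bundle $M_V$) only yields the inductive step ``$(m-1)$-normal and $H^1(C,M_V\otimes S^{m-1}E)=0$ imply $m$-normal,'' but it provides no base case --- normality at the threshold $m=d-e+g(e-1)$ is precisely the content of the theorem, and you cannot start the induction at $m=1$ since $V\subseteq H^0(C,E)$ need not be all of $H^0(C,E)$ (the scroll need not be linearly normal); (ii) the decisive vanishing $H^1(C,M_V\otimes S^{m-1}E)=0$ at the sharp threshold is deferred to unspecified ``Castelnuovo-type'' refinements of slope estimates that you yourself describe as too crude --- and indeed the filtration of $M_V$ by line bundles of degree $\geq e-d$ together with $\mu_{\min}(S^{m-1}E)\geq m-1$ only gives the vanishing for $m\geq d-e+2g$, which misses the claimed constant already for $e=1,2$; (iii) the positive-characteristic obstructions (the surjection $E\otimes S^{m-1}E\to S^mE$ is not split on global sections, and semistability is not preserved under tensor products) are acknowledged but not resolved. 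In short, the proposal reduces the theorem to a statement that is at least as hard as the theorem and leaves it open.

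For comparison, the paper avoids the infinite family of vanishings $H^1(C,M_V\otimes S^{m-1}E)=0$ altogether. Following \cite{GLP} and \cite{B}, it chooses a single general effective line bundle $A$ of degree $d-e+g$ on $C$ for which only finitely many vanishings are needed, namely $H^1(C,A)=H^1(C,t_*M\otimes A)=H^1(C,\wedge^2t_*M\otimes A)=0$ (these follow from the filtration of $t_*M\cong\Omega^1_{\P^{e+1}}|_C$ by generically twisted negative line bundles, in any characteristic). It then resolves $t^*A$ over $\sO_{\P^r}$ using Koszul cohomology, compares that resolution with the Koszul resolution of the union of fibres over the points of $D\in|A|$ via the lifting Lemma \ref{dialem}, and finishes with an Eagon--Northcott complex. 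The correction term $g(e-1)$ emerges from the rank discrepancy $g=\deg D-(h^0(C,A)-1)$ propagating through this comparison of resolutions, not from any slope computation on $S^{m-1}E$; this is exactly the point at which \cite{B} miscalculated and which your proposal does not reach.
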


When $g=0$, Theorem \ref{main1} gives a sharp bound. This was first proved by Bertin since the proof in \cite{B} works in this case and is independent on the characteristic of the base field. Another proof using a different method in characteristic zero was also given in \cite[Theorem 5.2]{KEP}. If $g=1$, we get an interesting bound $\reg(X) \leq d$. If we further impose extra conditions, the above bound can be established for certain singular scrolls (see Remark \ref{rmk:01} for details).

One of the essential points in the proof of Theorem \ref{main1} is to pick a particular line bundle on the curve with certain cohomological properties  and then to resolve its pullback on the scroll as a $\sO_{\nP^r}$-module. This idea goes back to \cite{GLP} and has been used in \cite{B} and \cite{N1}. We point out that the choice of such line bundle is not unique. In \cite{GLP} and \cite{N1}, for instance, a different line bundle with even stronger properties has been considered (see Remark \ref{rmk:02} for this direction in our case).

It is interesting to mention that a sharp regularity bound for the structure sheaf of a smooth projective variety in characteristic zero has been established in \cite[Theorem A]{KJP}, where the characteristic zero and the smoothness assumption are crucial. In some sense, the regularity bound would be relatively easy to establish for structure sheaves.  We point out some evidence in this direction in Corollary \ref{p:02}.


The paper is organized as follows: in Section \ref{prelimsec}, we collect basic facts and properties; Section \ref{proofsec} is devoted to the proof of Theorem \ref{main1}.\\

\noindent{\em Acknowledgment:}
The second author would like to thank the University of Arkansas for their generous hospitality, where this work was started.
The authors wish to thank the anonymous referee for helpful suggestions.

\section{Preliminaries}\label{prelimsec}

\noindent In this section, we briefly review relevant basic facts for the convenience of the reader. By a \emph{variety}, we mean a separated reduced and irreducible scheme of finite type over the field $k$.

A coherent sheaf $\sF$ on a projective variety $X$ with a very ample line bundle $L$ is said to be \emph{$m$-regular} with respect to $L$ if
$$H^i(X, \sF\otimes L^{m-i})=0 \quad \text{for all }i >0.$$
When the line bundle $L$ is clear from the context, we  simply say $\sF$ is $m$-regular.
The least such number, if exits, is denoted by $\reg (\sF)$.  If $X$ is a subvariety of a projective space $\nP^r$, the line bundle $L$ is always assumed to be $\sO_{X}(1)$.
By Mumford's regularity theorem (\cite[Theorem 1.8.5]{positivityI}), if $\sF$ is $m$-regular, then $\sF$ is $(m+1)$-regular. For more details, see \cite[Section 1.8]{positivityI}. We shall use the following property of regularity.

\begin{proposition}[{\cite[Lemmas 2.5]{L}}]\label{regten}
Consider an exact sequence of coherent sheaves on $\P^r$
$$
\cdots \longrightarrow F_i \longrightarrow \cdots \longrightarrow F_1 \longrightarrow F_0 \longrightarrow F \longrightarrow 0.
$$
If $F_i$ is $(p+i)$-regular for some integer $p$ and for each $i \geq 0$, then $F$ is $p$-regular.
\end{proposition}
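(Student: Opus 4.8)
The plan is to split the long exact sequence into short exact sequences of syzygy sheaves and then run a cohomology chase. Writing $F_{-1}=F$, I would set $K_i=\ker(F_i\to F_{i-1})$ for $i\geq 0$; by exactness $K_i=\im(F_{i+1}\to F_i)$, so the resolution decomposes into short exact sequences
$$0 \longrightarrow K_i \longrightarrow F_i \longrightarrow K_{i-1} \longrightarrow 0, \qquad i\geq 0,$$
with $K_{-1}=F$. By definition of $p$-regularity, it suffices to prove that $H^j\big(\P^r, F\otimes \sO_{\P^r}(p-j)\big)=0$ for every $j\geq 1$.

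Fix such a $j$. The key observation is a uniform one-step shift: twisting the $i$-th short exact sequence by $\sO_{\P^r}(p-j)$ and using the identity $p-j=(p+i)-(j+i)$, the long exact sequence in cohomology contains the fragment
$$H^{j+i}\big(F_i\otimes \sO_{\P^r}(p-j)\big)\longrightarrow H^{j+i}\big(K_{i-1}\otimes \sO_{\P^r}(p-j)\big)\longrightarrow H^{j+i+1}\big(K_i\otimes \sO_{\P^r}(p-j)\big).$$
Since $j+i\geq 1$ and $F_i$ is $(p+i)$-regular, the left-hand term vanishes, so the connecting map is injective:
$$H^{j+i}\big(K_{i-1}\otimes \sO_{\P^r}(p-j)\big)\hookrightarrow H^{j+i+1}\big(K_i\otimes \sO_{\P^r}(p-j)\big).$$
Starting from $i=0$, where $K_{-1}=F$, and composing these injections for $i=0,1,2,\dots$ produces a chain
$$H^{j}\big(F\otimes \sO_{\P^r}(p-j)\big)\hookrightarrow H^{j+1}\big(K_0\otimes \sO_{\P^r}(p-j)\big)\hookrightarrow\cdots\hookrightarrow H^{j+i+1}\big(K_i\otimes \sO_{\P^r}(p-j)\big)\hookrightarrow\cdots.$$

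To conclude, I would invoke Grothendieck vanishing on $\P^r$: each $K_i$ is coherent and $\dim\P^r=r$, so $H^{j+i+1}\big(K_i\otimes \sO_{\P^r}(p-j)\big)=0$ once $j+i+1>r$. Choosing $i$ large enough kills the last term of the chain, and hence $H^j\big(F\otimes\sO_{\P^r}(p-j)\big)=0$; as $j\geq 1$ was arbitrary, $F$ is $p$-regular. The main obstacle to watch for is that the given resolution need not be finite, so a straightforward downward induction on the syzygies is unavailable. The chain-of-injections argument is designed exactly to sidestep this: the cohomological degree $j+i+1$ strictly increases along the chain and must eventually exceed $r$, so Grothendieck vanishing terminates the propagation regardless of whether the resolution ever stops. (No nontrivial input beyond the long exact sequence and Grothendieck vanishing is needed; Mumford's monotonicity from the preceding discussion is not required here.)
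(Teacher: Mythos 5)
Your proof is correct and is the standard dimension-shifting argument for this lemma: the paper itself gives no proof, citing Lazarsfeld instead, and your argument (splitting into short exact sequences of syzygy sheaves $K_i$, using $(p+i)$-regularity of $F_i$ to get a chain of injections $H^{j}(F(p-j))\hookrightarrow H^{j+i+1}(K_i(p-j))$, and terminating with Grothendieck vanishing once $j+i+1>r$) is exactly the one in that reference. Your remark that the chain of injections sidesteps the possible non-finiteness of the resolution is also the right point to flag.
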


The line bundle $L$ induces an embedding $X \subseteq \P^r$ so that $X$ is defined by an ideal sheaf  $\sI_X$ on $\P^r$. Then $X$ is said to be \emph{$m$-regular} if  $\sI_X$ is $m$-regular. From the definition, that $X$ is $m$-regular is equivalent to the following two conditions:
\begin{enumerate}[\indent$(1)$]
\item $X$ is $(m-1)$-normal, i.e., the natural restriction map
$$
H^0(\P^{r}, \sO_{\P^{r}}(m-1)) \longrightarrow H^0 (X, \sO_X(m-1))
$$
is surjective;
\item The structure sheaf $\sO_X$ is $(m-1)$-regular.
\end{enumerate}
The number $\reg(\sI_X)$ is called the \emph{Castelnuovo-Mumford regularity} of $X$ and is denoted by $\reg(X)$.

Under suitable conditions, the regularity of $X$ can be read off from the regularity of its hyperplane section. We state this observation here.
\begin{lemma}\label{reglem}
Let $X \subseteq \P^r$ be a non-degenerate projective variety of dimension $n \geq 2$. Suppose that a general hyperplane section $Y =X\cap \P^{r-1}$ is $m$-regular for some integer $m>0$. Then $\sO_X$ is $(m-1)$-regular. If moreover $X$ is $(m-1)$-normal, then it is $m$-regular.
\end{lemma}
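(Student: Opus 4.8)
The plan is to compare the cohomology of $\sO_X$ with that of $\sO_Y$ through the hyperplane-section sequence and to propagate Serre vanishing downward in the twist. Since $X$ is integral and non-degenerate, a general hyperplane $\P^{r-1}$ does not contain $X$, so the defining linear form is a nonzerodivisor on $\sO_X$ and for every $\ell \in \Z$ we get the short exact sequence
$$0 \longrightarrow \sO_X(\ell-1) \longrightarrow \sO_X(\ell) \longrightarrow \sO_Y(\ell) \longrightarrow 0.$$
From the hypothesis that $Y \subseteq \P^{r-1}$ is $m$-regular I would extract two facts, using Mumford's theorem to twist up: first, that $\sO_Y$ is $(m-1)$-regular, so that $H^j(Y,\sO_Y(\ell))=0$ whenever $j\geq 1$ and $\ell \geq m-1-j$; and second, that $\sI_{Y}$ is $m$-regular, so that $H^1(\P^{r-1},\sI_Y(\ell))=0$ for all $\ell \geq m-1$, which makes $H^0(\P^{r-1},\sO_{\P^{r-1}}(\ell)) \to H^0(Y,\sO_Y(\ell))$ surjective for $\ell \geq m-1$.

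The goal is to prove $H^i(X,\sO_X(m-1-i))=0$ for every $i\geq 1$ (only $1\leq i\leq n$ being nontrivial), which is exactly $(m-1)$-regularity of $\sO_X$. I would establish the uniform claim that the natural map $H^i(X,\sO_X(\ell-1)) \to H^i(X,\sO_X(\ell))$ is injective for all $\ell \geq m-i$; chaining these injections $H^i(X,\sO_X(m-1-i)) \hookrightarrow H^i(X,\sO_X(m-i)) \hookrightarrow \cdots$ up to Serre vanishing then forces the first term to vanish. For $i\geq 2$ the injectivity is immediate from the long exact sequence of the displayed sequence, since the kernel of that map is a quotient of $H^{i-1}(Y,\sO_Y(\ell))$, and the latter vanishes as soon as $\ell \geq m-i = m-1-(i-1)$.

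The case $i=1$ is the delicate one, and it is precisely where the $(m-1)$-normality of $Y$ enters. Here the kernel of $H^1(X,\sO_X(\ell-1)) \to H^1(X,\sO_X(\ell))$ is the cokernel of the restriction $\rho_\ell\colon H^0(X,\sO_X(\ell)) \to H^0(Y,\sO_Y(\ell))$, so I must show $\rho_\ell$ is surjective for $\ell\geq m-1$. The key observation is that restriction of forms from $\P^r$ to $Y$ factors both through $X$ and through the hyperplane $\P^{r-1}$: the map $H^0(\P^r,\sO_{\P^r}(\ell)) \to H^0(\P^{r-1},\sO_{\P^{r-1}}(\ell))$ is surjective as restriction of polynomials, and $H^0(\P^{r-1},\sO_{\P^{r-1}}(\ell)) \to H^0(Y,\sO_Y(\ell))$ is surjective for $\ell\geq m-1$ by the vanishing of $H^1(\P^{r-1},\sI_Y(\ell))$; hence the composite $H^0(\P^r,\sO_{\P^r}(\ell)) \to H^0(Y,\sO_Y(\ell))$ is surjective, and since it factors through $\rho_\ell$, so is $\rho_\ell$. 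This completes the proof that $\sO_X$ is $(m-1)$-regular.

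Finally, recalling the characterization that $X\subseteq\P^r$ is $m$-regular exactly when $X$ is $(m-1)$-normal and $\sO_X$ is $(m-1)$-regular, the argument above supplies the second condition, and adjoining the hypothesis that $X$ is $(m-1)$-normal yields the concluding statement. I expect the only genuine obstacle to be the $i=1$ surjectivity of $\rho_\ell$: the higher cohomology groups reduce mechanically to vanishing of $\sO_Y$-cohomology, whereas $i=1$ forces one to leave pure vanishing statements and to transport the normality information on $Y$ back to $X$ through the ambient projective spaces.
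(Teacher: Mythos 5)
Your proof is correct and follows essentially the same route as the paper: the hyperplane-section sequence, the vanishing of $H^{i-1}(Y,\sO_Y(\cdot))$ for $i\geq 2$, the surjectivity of $H^0(X,\sO_X(\ell))\to H^0(Y,\sO_Y(\ell))$ obtained by factoring the restriction of ambient forms through both $X$ and $\P^{r-1}$ for the $i=1$ case, and Serre vanishing to propagate the resulting injections. Your write-up is in fact slightly more explicit than the paper's about why $\rho_\ell$ is surjective, which is the one step the paper leaves implicit.
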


\begin{proof}
Consider the short exact sequence
$$
0 \longrightarrow  \sO_X(-1) \longrightarrow \sO_X \longrightarrow \sO_Y \longrightarrow 0.
$$
Since $Y \subseteq \P^{r-1}$ is $k$-normal for all $k \geq m-1$, we obtain the surjective map
$$
H^0(X, \sO_X(k)) \longsurjrightarrow H^0(Y, \sO_Y(k)).
$$
Furthermore, since $\sO_Y$ is $k$-regular for all $k \geq m-1$, we have $$H^{i-1}(Y, \sO_Y(k-i+1))=H^i(Y, \sO_Y(k-i+1))=0 \quad  \text{for all }i \geq 2.$$
It now follows that
$$
H^i(X, \sO_X(k-i)) = H^i(X, \sO_X(k-i+1)) \quad \text{for all }i \geq 1.
$$
Note that $H^i(X, \sO_X(l))=0$ for $i>0$ and a sufficiently large integer $l$.
Thus $\sO_X$ is $(m-1)$-regular. If further assume that $X$ is $(m-1)$-normal, then it is clear that $X$ is $m$-regular.
\end{proof}

In some cases, it is relatively easy to obtain an optimal regularity bound for the structure sheaf, as showed in the following proposition.
\begin{proposition}\label{p:02}
	Let $X \subseteq \P^r$ be a non-degenerate projective variety of codimension $e$ and degree $d$ over an algebraically closed field $k$. Assume one the following conditions holds:
	\begin{itemize}
		\item [(i)] $\dim X=2$;
		\item [(ii)] $\dim X=3$, $X$ has isolated singularities and $\Char k=0$.
	\end{itemize}
	Then $\reg(\sO_X) \leq d-e.$
\end{proposition}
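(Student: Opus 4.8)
The plan is to reduce the statement to known Castelnuovo-Mumford regularity bounds for curves and surfaces by cutting $X$ with a general hyperplane and then applying Lemma \ref{reglem}. The desired inequality $\reg(\sO_X) \le d-e$ is exactly the assertion that $\sO_X$ is $(d-e)$-regular. By Lemma \ref{reglem} (whose hypothesis $m>0$ is automatic here, since a nondegenerate variety of codimension $e$ has degree $d \ge e+1$, so $m = d-e+1 \ge 2$), it therefore suffices to prove that a general hyperplane section $Y = X \cap \P^{r-1}$ is $(d-e+1)$-regular. The first thing I would record is that passing to a general hyperplane section preserves the degree $d$, the codimension $e$, and, since $\dim X \ge 2$ in both cases, the nondegeneracy; thus $Y$ is a nondegenerate subvariety of $\P^{r-1}$ of degree $d$ and codimension $e$.

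In case (i) we have $\dim X = 2$, so $Y$ is a curve. Over an algebraically closed field, a general hyperplane section of an integral variety of dimension at least two is again integral by Bertini's theorems, so $Y$ is an integral nondegenerate curve of degree $d$ and codimension $e$ in $\P^{r-1}$. The Gruson-Lazarsfeld-Peskine theorem \cite{GLP}, which is valid in arbitrary characteristic for integral curves, then gives $\reg(Y) = \reg(\sI_Y) \le d-e+1$; that is, $Y$ is $(d-e+1)$-regular, and Lemma \ref{reglem} yields that $\sO_X$ is $(d-e)$-regular.

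In case (ii) we have $\dim X = 3$ with isolated singularities and $\Char k = 0$. Because $\Sing(X)$ is finite, a general hyperplane $H$ misses it entirely; applying Bertini's smoothness theorem in characteristic zero to the section of the smooth locus $X \setminus \Sing(X)$ then shows that $Y = X \cap H$ is smooth everywhere, hence a smooth (in particular integral) nondegenerate surface of degree $d$ and codimension $e$ in $\P^{r-1}$. The Eisenbud-Goto bound for smooth surfaces in characteristic zero \cite{P,L} gives $\reg(Y) \le d-e+1$, so $Y$ is $(d-e+1)$-regular, and Lemma \ref{reglem} again yields that $\sO_X$ is $(d-e)$-regular.

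The main point to get right is the behaviour of the general hyperplane section: one must simultaneously arrange the qualitative hypotheses needed by the cited bounds---integrality of $Y$ in case (i), smoothness of $Y$ in case (ii)---while checking that degree, codimension, and nondegeneracy are unchanged. In case (ii) the characteristic-zero assumption enters twice and is genuinely needed: once so that Bertini guarantees the section of the smooth locus is smooth, and once to invoke the smooth-surface regularity bound itself. The isolated-singularity hypothesis is exactly what allows a general hyperplane to avoid the singular points, so that the resulting surface is smooth and the characteristic-zero surface bound applies.
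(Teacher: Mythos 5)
Your proof is correct and follows essentially the same route as the paper: pass to a general hyperplane section, use Bertini to get an integral curve in case (i) and a smooth surface in case (ii), invoke the bounds of \cite{GLP} and \cite{L} to conclude $\reg(Y)\le d-e+1$, and then apply Lemma \ref{reglem}. Your additional remarks (that $d\ge e+1$ ensures $m>0$, and that the general hyperplane avoids the finite singular locus in case (ii)) are correct fillings-in of details the paper leaves implicit.
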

\begin{proof}
	Take a general hyperplane section $Y = X \cap \P^{r-1}$.
	Since we are working over an infinite field, we can apply Bertini's theorem to $Y$. Thus $Y$ is an integral curve under the condition (i) and a nonsingular surface under the condition (ii). The embedding $Y \subset \P^{r-1}$ is non-degenerate. By \cite{GLP} and \cite{L}, we know that $\reg(Y) \leq d-e+1$. 
	Now by the lemma above, $\sO_X$ is $(d-e)$-regular.
\end{proof}

Next, we briefly recall the definition of Koszul groups associated to a line bundle of a projective variety. For the details, we refer the reader to Green's paper  \cite{Green:KoszulI}.
Let $V \subseteq H^0(X, L)$ be a base-point-free subspace. There is a canonical surjective evaluation homomorphism $e_{V} \colon V \otimes \sO_X \rightarrow L$. Define $M_{V}$ as the kernel of $e_V$ to form a short exact sequence
$$
0 \longrightarrow M_V \longrightarrow V \otimes \sO_X \stackrel{e_V}{\longrightarrow} L \longrightarrow 0.
$$
Fix a line bundle $B$ on $X$, and define
$$
R=R(X, B, L)= \bigoplus_{m \geq 0} H^0(X,B+ mL).
$$
Then $R$ is naturally a graded $S=\text{Sym}^\bullet (V)$-module so that it has a minimal free resolution
$$
\cdots \longrightarrow \bigoplus_{q} K_{p,q}(X, B, V) \otimes_k S(-p-q) \longrightarrow \cdots  \longrightarrow \bigoplus_{q} K_{0,q}(X, B, V) \otimes_k S(-q)  \longrightarrow R \longrightarrow 0,
$$
in which the vector space $K_{p,q}(X, B, V)$ is the Koszul group associated to $B$ with respect to $L$. We shall use the following proposition to compute these groups.

\begin{proposition}[{\cite[Proposition 3.2]{EL2}}]\label{koscoh}
Assume that $H^i(X, B+mL)=0$ for $i>0$ and $m>0$. Then for $q \geq 2$, we have
$$
K_{p,q}(X, B, V)=H^1(X, \wedge^{p+1}M_V \otimes (B+(q-1)L)).
$$
If moreover $H^1(X, B)=0$, then we allow $q=1$ in the above formula.
\end{proposition}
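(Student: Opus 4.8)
The plan is to realize the Koszul group $K_{p,q}(X,B,V)$, which by definition is the degree-$(p+q)$ part of $\Tor_p^S(R,k)$, as the cohomology of an explicit three-term complex, and then to match that cohomology with $H^1$ of a twist of $\wedge^{p+1}M_V$ by means of the vector bundle $M_V$. Concretely, $K_{p,q}(X,B,V)$ is computed as the cohomology at the middle spot of
\[
\wedge^{p+1}V \otimes H^0(B+(q-1)L) \longrightarrow \wedge^{p}V \otimes H^0(B+qL) \longrightarrow \wedge^{p-1}V \otimes H^0(B+(q+1)L),
\]
where the maps are the usual Koszul differentials induced by the inclusion $V \subseteq H^0(L)$. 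The whole point is to produce these groups of sections, together with these differentials, from a single exact complex of sheaves on $X$.

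To build that complex, I would start from the evaluation sequence $0 \to M_V \to V\otimes\sO_X \to L \to 0$. Since $L$ has rank one, the exterior power $\wedge^{p+1}(V\otimes\sO_X)$ carries a two-step filtration, giving the short exact sequence
\[
0 \longrightarrow \wedge^{p+1}M_V \longrightarrow \wedge^{p+1}V\otimes\sO_X \longrightarrow \wedge^{p}M_V \otimes L \longrightarrow 0.
\]
Splicing these sequences for $p, p-1, \dots, 0$ (each tensored by the appropriate power of $L$) yields the exact complex
\[
0 \to \wedge^{p+1}M_V \to \wedge^{p+1}V\otimes\sO_X \to \wedge^{p}V\otimes L \to \cdots \to V\otimes L^{p} \to L^{p+1} \to 0,
\]
whose internal maps are exactly the contraction maps induced by $e_V$. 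Now I would twist this complex by $B+(q-1)L$; taking global sections recovers the Koszul complex above, with $\wedge^{p+1-j}V\otimes H^0(B+(q-1+j)L)$ in position $j$ and the Koszul differentials as connecting maps.

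Finally I would run a d\'evissage on the twisted sheaf complex, writing $G=\wedge^{p+1}M_V\otimes(B+(q-1)L)$ and $\mathcal F_j=\wedge^{p+1-j}V\otimes(B+(q-1+j)L)$, and breaking it into short exact sequences through the syzygy sheaves $Z_j=\ker(\mathcal F_j \to \mathcal F_{j+1})$. Left-exactness of $H^0$ identifies $H^0(Z_1)$ with the kernel of the Koszul differential out of $H^0(\mathcal F_1)$, while the sequence $0\to G\to\mathcal F_0\to Z_1\to 0$ gives $H^1(G)=\Coker\big(H^0(\mathcal F_0)\to H^0(Z_1)\big)$ as soon as $H^1(\mathcal F_0)=\wedge^{p+1}V\otimes H^1(B+(q-1)L)=0$. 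Assembling these two facts shows $K_{p,q}(X,B,V)=H^1(G)$. The hypothesis $H^i(B+mL)=0$ for $i>0$, $m>0$ guarantees $H^1(B+(q-1)L)=0$ precisely when $q\ge 2$; for $q=1$ one has $q-1=0$, so the extra assumption $H^1(B)=0$ is exactly what is needed to keep the argument running. The main obstacle, to my mind, is not any single cohomology vanishing but the bookkeeping: verifying that the splicing really produces an exact complex and, above all, that the maps it induces on global sections coincide on the nose with the Koszul differentials defining $K_{p,q}$, so that the abstract $\Tor$ computation and the geometric one are identified.
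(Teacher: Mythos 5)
The paper does not prove this proposition; it is quoted directly from \cite[Proposition 3.2]{EL2}. Your argument is correct and is essentially the standard proof given there: computing $K_{p,q}$ as the middle cohomology of the Koszul complex, identifying $\ker d_{p,q}$ with $H^0(\wedge^p M_V\otimes(B+qL))$ by left-exactness, and then reading off the cokernel from the long exact sequence of $0\to\wedge^{p+1}M_V\otimes(B+(q-1)L)\to\wedge^{p+1}V\otimes(B+(q-1)L)\to\wedge^pM_V\otimes(B+qL)\to 0$, where the vanishing $H^1(B+(q-1)L)=0$ (automatic for $q\geq 2$, assumed for $q=1$) is exactly what is used.
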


We conclude this section by listing a couple of technical results that we shall use in our proof.

\begin{lemma}[{cf. \cite[Lemma 1.7]{GLP}, \cite[Lemma 4.1]{B}}]\label{speciallb}
Let $C$ be a smooth irreducible projective curve of genus $g$, and $p \colon C \to \P^{e+1}$ be a  morphism of degree $d=\deg p^*\sO_{\nP^{e+1}}(1)$. Assume that $p(C)$ is non-degenerate, and set $M=p^* \Omega^1_{\P^{e+1}}(1)$. Then for a general line bundle $A$ of degree $d-e+g$ on $C$, one has $h^0(C, A)=d-e+1$ and $h^1(C, A)=h^1(C, M \otimes A)=h^1(C, \wedge^2 M \otimes A)=0$.
\end{lemma}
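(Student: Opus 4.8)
The plan is to convert all four assertions into vanishing statements for $H^1$, to settle the line-bundle case by Serre duality and Brill--Noether theory, and to reduce the two bundle cases—via the exterior powers of the Euler sequence—to the surjectivity of certain multiplication maps, which I would then establish using the genericity of $A$. First I would record the geometric input: pulling back the Euler sequence on $\P^{e+1}$ along $p$ gives
$$
0 \longrightarrow M \longrightarrow V\otimes \sO_C \longrightarrow L \longrightarrow 0,
$$
where $L=p^*\sO_{\P^{e+1}}(1)$ has degree $d$ and $V=p^*H^0(\P^{e+1},\sO(1))\subseteq H^0(C,L)$ has dimension $e+2$ (it is $(e+2)$-dimensional because $p(C)$ is non-degenerate, and base-point-free because $p$ is a morphism); thus $M$ has rank $e+1$ and degree $-d$. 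Two reductions follow. Since $\deg A=d-e+g$, Riemann--Roch gives $h^0(A)-h^1(A)=d-e+1$, so the equality $h^0(A)=d-e+1$ is \emph{equivalent} to $h^1(A)=0$; hence all four claims are $H^1$-vanishings. Moreover each such vanishing is an open condition on $A$ in the irreducible variety $\Pic^{\,d-e+g}(C)$, so it suffices to prove each one for a general $A$, and they then hold simultaneously for a general $A$.

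For $h^1(A)=0$ I would use Serre duality, $h^1(A)=h^0(K_C\otimes A^{-1})$. As $A$ ranges over a general point of $\Pic^{\,d-e+g}(C)$, the bundle $K_C\otimes A^{-1}$ is a general line bundle of degree $2g-2-(d-e+g)=g-2-(d-e)$. Since $p(C)$ is a non-degenerate curve in $\P^{e+1}$ we have $d\ge e+1$, so this degree is at most $g-3$, in particular at most $g-1$; and a general line bundle of degree at most $g-1$ has no nonzero sections. Hence $h^1(A)=0$, and then $h^0(A)=d-e+1$ as noted above.

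For the two bundle vanishings I would exploit the exterior powers of the Euler sequence. Since $L$ has rank one, taking $\wedge^a$ yields
$$
0 \longrightarrow \wedge^a M \longrightarrow \wedge^a V\otimes\sO_C \longrightarrow \wedge^{a-1}M\otimes L \longrightarrow 0.
$$
Twisting by $A$ and using $h^1(A)=0$, the long exact sequence identifies $H^1(\wedge^a M\otimes A)$ with the cokernel of the multiplication map $\wedge^a V\otimes H^0(A)\to H^0(\wedge^{a-1}M\otimes L\otimes A)$. Thus $h^1(M\otimes A)=0$ and $h^1(\wedge^2 M\otimes A)=0$ amount, respectively, to the surjectivity of $V\otimes H^0(A)\to H^0(L\otimes A)$ and of $\wedge^2 V\otimes H^0(A)\to H^0(M\otimes L\otimes A)$. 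To begin with the first, I would choose a general pencil $W\subseteq V$, which is base-point-free because a general codimension-two linear section misses the curve $p(C)$; the base-point-free pencil trick applied to $0\to L^{-1}\to W\otimes\sO_C\to L\to 0$ shows that the cokernel of $W\otimes H^0(A)\to H^0(L\otimes A)$ embeds into $H^1(L^{-1}\otimes A)$, so surjectivity follows once this single line-bundle group vanishes.

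The main obstacle is exactly here: $L^{-1}\otimes A$ has degree $g-e$, so $h^1(L^{-1}\otimes A)=e-1$ for general $A$, which vanishes only when $e=1$, and the analogous reduction for $\wedge^2 M$ meets the same wall. Hence, for $e\ge 2$, formal manipulation of the Euler sequence alone does not suffice, and the genericity of $A$ must be used in an essential, non-formal way. Concretely I would write the general $A$ as $\sO_C(P_1+\cdots+P_{d-e+g})$ for general points $P_i\in C$ and run the Gruson--Lazarsfeld--Peskine argument of adding one general point at a time, using the exterior-power filtrations above to control global generation and showing that each $P_i$ imposes the maximal number of conditions on the sections of $M\otimes \sO_C(P_1+\cdots+P_i)$ and of $\wedge^2 M\otimes\sO_C(P_1+\cdots+P_i)$ (this is where base-point-freeness of $V$, i.e.\ the general position of the $P_i$ forced by non-degeneracy, enters), so that $h^1$ drops by the full rank at each step until it reaches $0$. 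Checking, by Riemann--Roch, that $d-e+g$ general points suffice to annihilate $h^1(M\otimes A)$ and $h^1(\wedge^2 M\otimes A)$ is the delicate numerical heart of the proof, and the surjectivity of the two multiplication maps above is the clean reformulation of the generic vanishing that one ultimately has to establish.
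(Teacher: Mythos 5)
Your treatment of the line-bundle statements is fine: the reduction of $h^0(C,A)=d-e+1$ to $h^1(C,A)=0$ via Riemann--Roch, and the vanishing of $h^1(C,A)$ for general $A$ of degree $d-e+g\geq g-1$, are exactly what is needed and agree with the paper. But the two vector-bundle vanishings $h^1(C,M\otimes A)=h^1(C,\wedge^2M\otimes A)=0$ --- which are the entire content of the lemma --- are not actually proved in your proposal. You correctly observe that the exterior powers of the Euler sequence reduce them to the surjectivity of certain multiplication maps, and you correctly diagnose that the base-point-free pencil trick fails for $e\geq 2$ because $h^1(C,L^{-1}\otimes A)=e-1$. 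At that point, however, you only gesture at a ``add one general point at a time'' argument and yourself label the key step (that each general point imposes the maximal number of conditions, all the way down to $h^1=0$) as something ``one ultimately has to establish.'' That step is genuinely nontrivial: after twisting down by the first few points the relevant bundle $K_C\otimes M^*(-P_1-\cdots-P_i)$ need no longer be generically globally generated, so there is no a priori reason each subsequent point drops $h^0$ by the full rank $e+1$. So the proposal stops exactly where the real work begins.

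The idea you are missing is the filtration device of Gruson--Lazarsfeld--Peskine, which the paper uses to bypass all of this. One filters $M=F^1\supseteq F^2\supseteq\cdots\supseteq F^{e+2}=0$ by subbundles so that each quotient $L_i=F^i/F^{i+1}$ is a line bundle of \emph{strictly negative} degree (such a filtration comes from a general flag of subspaces of $V$, using non-degeneracy of $p(C)$). Then $\wedge^2M$ inherits a filtration with quotients $L_i\otimes L_j$, and the long exact sequences reduce $h^1(C,M\otimes A)=0$ and $h^1(C,\wedge^2M\otimes A)=0$ to $h^1(C,L_i\otimes A)=0$ and $h^1(C,L_i\otimes L_j\otimes A)=0$. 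Since $\deg M=-d$, $\operatorname{rank}M=e+1$ and each $\deg L_i\leq -1$, one gets $\deg L_i\geq e-d$ and $\deg(L_i\otimes L_j)\geq e-d-1$, hence $\deg(L_i\otimes A)\geq g$ and $\deg(L_i\otimes L_j\otimes A)\geq g-1$; as these are finitely many twists of a general $A$, they are general line bundles of degree $\geq g-1$ and therefore non-special. This one observation replaces the entire ``delicate numerical heart'' you defer, and is why the paper's proof is four lines long.
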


\begin{proof}
As in \cite[Proof of Lemma 1.7]{GLP}, we first consider a filtration
$$
M=F^1 \supseteq F^2 \supseteq \cdots \supseteq F^{e+1} \supseteq F^{e+2}=0
$$
of $M$ by vector bundles such that each of the quotients $L_i = F^i/F^{i+1}$ is a line bundle of strictly negative degree. Then $h^1(C, L_i \otimes A)=h^1(C, L_i \otimes L_j \otimes A)=0$ implies $h^1(C, M \otimes A)=h^1(C, \wedge^2 M \otimes A)=0$.
Since $\deg M = -d$ and $\text{rank } M = e+1$, it follows that $\deg L_i \geq e-d$ and $\deg L_i \otimes L_j \geq -d+e-1$. Note that a generic line bundle of degree $\geq g-1$ is non-special. Thus the assertion now follows.
\end{proof}

\begin{lemma}[{\cite[Theorem B.2.2]{positivityI}}]\label{lm:10} Let  $u \colon E \rightarrow F$ be a generically surjective homomorphism of vector bundles of ranks $e$ and $f$ on a smooth variety. Associated to $u$, the \emph{Eagon-Northcott complex}
\begin{equation*}
{\bf EN_u:}
\begin{split}
0 \to \wedge^eE \otimes \Sym^{e-f-1}(F^*) \otimes \det F^* \to \cdots \hspace{7cm}\\
\cdots \to \wedge^{f+2}E \otimes F^* \otimes \det F^* \to \wedge^{f+1}E \otimes \det F^* \to E \xrightarrow{u} F \to 0.
\end{split}
\end{equation*}	
is exact away from the support of $\text{coker}(u)$.
\end{lemma}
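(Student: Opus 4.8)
The plan is to reduce the assertion, which is local, to the case of a split surjection, and then to settle that case by the characteristic-free Buchsbaum--Eisenbud acyclicity criterion; the only thing requiring care is that each step survives in arbitrary characteristic. Exactness of a complex of locally free sheaves may be checked on stalks, so I fix a point $x$ in the open set $U := X \setminus \Supp(\text{coker}(u))$ and pass to the stalk at $x$. There $u$ becomes a homomorphism of finite free modules $u_x \colon R^{\oplus e} \to R^{\oplus f}$ over the local ring $R = \sO_{X,x}$ with $\text{coker}(u_x)=0$, that is, $u_x$ is surjective. Since $R^{\oplus f}$ is free, hence projective, the surjection $u_x$ splits, so that $E_x \cong K \oplus R^{\oplus f}$ with $K = \ker(u_x)$ free of rank $c := e-f$, and $u_x$ is identified with the projection onto the second factor. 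The only input here is that a surjection onto a free module splits, which holds over an arbitrary commutative ring, so no assumption on the characteristic enters. As the formation of the complex $\mathbf{EN}_u$ is functorial in $u$, an isomorphism of $u$ induces an isomorphism of complexes, and it suffices to prove exactness for the split projection $K \oplus F \to F$.

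For the split projection one has $\text{coker}(u)=0$, so the claim is that $\mathbf{EN}_u$ is acyclic. Here I would invoke the acyclicity theory of the Eagon--Northcott complex: $\mathbf{EN}_u$ is a locally free resolution of $\text{coker}(u)$ as soon as the ideal $I_f(u)$ of maximal minors of $u$ has grade equal to its maximal possible value $e-f+1$, this being a standard consequence of the Buchsbaum--Eisenbud ``what makes a complex exact'' criterion, which is valid in every characteristic. For a surjective $u$ the maximal minors $I_f(u)$ generate the unit ideal $R$, whose grade is infinite; the depth conditions are therefore satisfied trivially and $\mathbf{EN}_u$ is exact. Equivalently, and more explicitly, the section $s \colon F \to E$ of the split projection can be used to write down a contracting homotopy $h$ with $dh+hd=\mathrm{id}$, which exhibits $\mathbf{EN}_u$ as null-homotopic.

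The main obstacle is precisely this split case: one must either verify the numerical rank identities of the Eagon--Northcott complex together with the depth statement for $I_f(u)$, or construct the homotopy $h$ and check $dh+hd=\mathrm{id}$ term by term. The delicate point is that the differentials of $\mathbf{EN}_u$ are assembled from the comultiplications on $\wedge^\bullet E$ and on $\Sym^\bullet F^*$ paired through $u$, so the verification is a combinatorial computation that must be carried out integrally. In particular one cannot shortcut it by identifying $\mathbf{EN}_u$ with a naive contraction Koszul complex, since such an identification already fails over a field of positive characteristic, where symmetric powers and divided powers diverge; this is exactly the kind of subtlety the characteristic-free criterion is designed to bypass.
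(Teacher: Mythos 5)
The paper does not prove this lemma at all: it is imported verbatim from Lazarsfeld's \emph{Positivity I}, Theorem B.2.2, so there is no internal argument to compare yours against. Your outline is the standard proof of that cited theorem and is correct as far as it goes. Exactness of a complex of coherent sheaves is indeed a stalk-local condition; at a point off $\Supp(\text{coker}(u))$ the stalk map $u_x$ is a surjection of free modules over the local ring $\sO_{X,x}$ and therefore splits with free kernel of rank $e-f$; and for a split surjection the ideal $I_f(u)$ of maximal minors is the unit ideal, so the Buchsbaum--Eisenbud acyclicity criterion (which is characteristic-free) gives exactness of $\mathbf{EN}_u$ --- alternatively, once acyclicity is known, a bounded exact complex of free modules over a local ring is automatically split exact, so the contracting homotopy you describe exists. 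The step you candidly defer --- the acyclicity of the Eagon--Northcott/Buchsbaum--Rim complex under the grade condition on $I_f(u)$, or equivalently the term-by-term verification of the homotopy identity for the differentials built from the comultiplications on $\wedge^{\bullet}E$ and $\Sym^{\bullet}F^{*}$ --- is precisely the content of the theorem the paper is citing, and you are right both that this is where the real work lies and that it must be carried out integrally (using divided rather than symmetric powers where the two diverge) to remain valid in positive characteristic, which matters since the paper works over a field of arbitrary characteristic. In short: your reduction is correct and is the intended proof; what remains after your reduction is exactly the black box the paper chose to cite rather than reprove.
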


\begin{lemma}\label{dialem}
	Let $\sF$ and $\sG$ be coherent sheaves on $\nP^r$. Consider the following diagram
	\begin{equation}\label{eq:04}
	\begin{split}
	\xymatrix{
		\sO_{\P^r}^a \ar[r]^u \ar@{^{(}.>}[d]^{\exists\ \phi'} & \sF \ar[r]\ar[d]^\phi & 0 \\
		\sO_{\P^r}^b \ar[r]^v & \sG \ar[r] & 0
	}
	\end{split}
	\end{equation}
	where $u$ and $v$ are surjective morphisms, $\phi$ is a morphism, and $a\leq b$. Assume that $u$, $v$ and $\phi$ induce injective morphisms on global sections and satisfy the following condition $$(\phi\circ u)(H^0(\nP^r,\sO_{\P^r}^a))\subseteq v(H^0(\nP^r, \sO_{\P^r}^b))\subseteq H^0(\nP^r,\sG).$$ Then there exists an injective morphism $\phi':\sO_{\P^r}^a\hookrightarrow \sO_{\P^r}^b$ lifting $\phi$ with $\Coker \phi'\simeq \sO_{\nP^r}^{b-a}$ such that the diagram (\ref{eq:04}) is commutative.
\end{lemma}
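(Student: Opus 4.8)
The plan is to construct $\phi'$ directly at the level of global sections and then verify that the resulting map of free sheaves lifts $\phi$ and has the asserted cokernel. First I would pass to global sections. Since $u$, $v$, and $\phi$ all induce injections on $H^0$, and since $\sO_{\P^r}$ has one-dimensional space of global sections, the free sheaves $\sO_{\P^r}^a$ and $\sO_{\P^r}^b$ have global section spaces of dimensions $a$ and $b$ respectively. Writing $e_1,\dots,e_a$ for the standard basis of $H^0(\nP^r,\sO_{\P^r}^a)$ and $f_1,\dots,f_b$ for that of $H^0(\nP^r,\sO_{\P^r}^b)$, the hypothesis
$$
(\phi\circ u)\bigl(H^0(\nP^r,\sO_{\P^r}^a)\bigr)\subseteq v\bigl(H^0(\nP^r,\sO_{\P^r}^b)\bigr)
$$
guarantees that each $\phi(u(e_j))$ lies in the image of $v$ on global sections. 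Because $v$ is injective on $H^0$, there is a unique section $w_j\in H^0(\nP^r,\sO_{\P^r}^b)$ with $v(w_j)=\phi(u(e_j))$; I would define $\phi'$ on global sections by $e_j\mapsto w_j$ and extend it $\sO_{\P^r}$-linearly to a morphism of free sheaves. Since a homomorphism $\sO_{\P^r}^a\to\sO_{\P^r}^b$ is determined by a scalar matrix, and such a matrix is determined by its effect on the global basis, this prescription produces a well-defined morphism of sheaves, and by construction $v\circ\phi'=\phi\circ u$, so the diagram commutes.

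Next I would check that $\phi'$ is injective as a map of sheaves. Because $\phi'$ is given by an $a\times b$ scalar matrix, injectivity of the sheaf map is equivalent to injectivity of the induced linear map on global sections $H^0(\nP^r,\sO_{\P^r}^a)\to H^0(\nP^r,\sO_{\P^r}^b)$. The commutativity $v\circ\phi'=\phi\circ u$ on global sections, together with the injectivity of $v$ and of $\phi\circ u$ (the latter following from injectivity of $u$ and $\phi$ on $H^0$), forces $\phi'$ to be injective on global sections: if $\phi'(s)=0$ then $\phi(u(s))=v(\phi'(s))=0$, hence $s=0$. Thus the scalar matrix of $\phi'$ has rank $a$, and since $a\le b$ this matrix has a full-rank $a$-column block.

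Finally, I would analyze the cokernel. A constant $a\times b$ matrix of rank $a$ defines a split injection $\sO_{\P^r}^a\hookrightarrow\sO_{\P^r}^b$: after a change of basis in $H^0(\nP^r,\sO_{\P^r}^b)$ realized by an element of $GL_b(k)$ (equivalently an automorphism of $\sO_{\P^r}^b$), the matrix becomes the standard inclusion of the first $a$ coordinates, whence $\Coker\phi'\cong\sO_{\nP^r}^{b-a}$. The main obstacle is not any single computation but the bookkeeping that ties the sheaf-theoretic statement to linear algebra: one must be careful that morphisms between trivial bundles are genuinely scalar (so that the full-rank condition on $H^0$ promotes to a split injection of sheaves, not merely a generically injective one) and that the change of basis splitting the matrix can be chosen compatibly with the already-fixed commuting square. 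Once it is observed that $\Hom(\sO_{\P^r}^a,\sO_{\P^r}^b)=\Mat_{b\times a}(k)$ canonically, all three steps reduce to elementary linear algebra and the argument goes through without difficulty.
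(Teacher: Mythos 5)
Your proposal is correct and follows essentially the same route as the paper: both arguments reduce the problem to linear algebra on global sections via the identification $\Hom(\sO_{\P^r}^a,\sO_{\P^r}^b)=\Mat_{b\times a}(k)$, with the paper phrasing the lift as the inclusion $V\subseteq W$ of the images of $H^0(\sO_{\P^r}^a)$ and $H^0(\sO_{\P^r}^b)$ in $H^0(\sG)$, which is exactly your full-column-rank constant matrix after a choice of bases. The only cosmetic slip is calling the matrix $a\times b$ rather than $b\times a$; the substance is fine.
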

\begin{proof} By assumption, we may assume that $\sO_{\P^r}^a\simeq V\otimes \sO_{\nP^r}$ and $\sO_{\P^r}^b\simeq W\otimes \sO_{\nP^r}$ for vector subspaces $V$ and $W$ of $H^0(\nP^r,\sG)$ such that $V\subseteq W$. Hence $\phi'$ is induced by the inclusion $V\subseteq W$. The commutativity of the diagram (\ref{eq:04}) is checked locally by using the fact that at a closed point $x$, the elements of $V$ and $W$ induce  sets of generators for $\sF_x$ and $\sG_x$, respectively.
\end{proof}

%

\section{Proof of the main result}\label{proofsec}

\noindent This section is devoted to the proof of Theorems \ref{main1}. Recall that $C$ is a nonsingular irreducible projective curve of genus $g\geq 0$ and $E$ is a very ample vector bundle on $C$ of rank $n$ and degree $d$. The scroll $X=\nP(E)$ is embedded into a projective space $\nP^r=\nP(V)$ by a map $p$  determined by a base-point-free subspace $V \subseteq H^0(X, \sO_{X}(1))$ of dimension $r+1$. Denote by $t:X\rightarrow C$ the canonical projection. Write $e=\codim X$ and set $S=\Sym^\bullet (V)$ as the symmetric algebra of $V$. Our setting can be summarized in the following diagram
\[
\begin{split}
\xymatrix{
	X=\P(E) \ar[d]_t \ar[r]^-p& \P^r\\
	C &
}
\end{split}.
\]

\begin{proof}[Proof of Theorem \ref{main1}] First of all, the vector space $V$ is naturally a base-point-free subspace of $H^0(C,E)$ under the identification $H^0(C,E)=H^0(X,\sO_X(1))$. Let $M$ be the kernel of the surjective evaluation map $V\otimes \sO_X\rightarrow \sO_X(1)$. Then there is a short exact sequence
\begin{equation}\label{eq:41}
0\longrightarrow M\longrightarrow V\otimes\sO_X\longrightarrow \sO_X(1)\longrightarrow 0.
\end{equation}
Since $R^1t_*M=0$, pushing down this sequence to $C$ by $t$ yields a short exact sequence
\begin{equation}
0\longrightarrow t_*M\longrightarrow V\otimes\sO_C\longrightarrow E\longrightarrow 0.
\end{equation}

Take a general linear subspace $W\subseteq V$ of dimension $n-1$ and let $\overline{V}=V/W$ be the quotient space. The linear space $\nP(\overline{V})$ is naturally a linear subspace of $\nP^r$. As $W$ is general, the following conditions hold.
\begin{enumerate}
	\item [\stepcounter{equation}(\theequation)]There is a short exact sequence $$0\longrightarrow W\otimes\sO_C\longrightarrow E\longrightarrow \det E\longrightarrow 0.$$
	\item [\stepcounter{equation}(\theequation)] The intersection $C_0=X\cap \nP(\overline{V})$ is a curve such that the restriction map $$t|_{C_0}:C_0\longrightarrow C$$ is an isomorphism.
\end{enumerate}
Snake lemma gives rise to the following diagram
$$\begin{CD}
@. @. 0  @. 0 @. \\
@. @. @VVV @VVV\\
@. @. W \otimes \sO_C @=  W \otimes \sO_C \\
@. @. @VVV @VVV\\
0 @>>> t_*M @>>> V \otimes \sO_C @>>> E @>>> 0\\
@. @| @VVV @VVV\\
0 @>>> t_*M @>>> \overline{V}\otimes \sO_C @>>> \det E @>>> 0\\
@. @. @VVV @VVV\\
@. @. 0  @. 0 @. \\
\end{CD}.$$
We identify $C$ as $C_0$ so that  $\sO_{\nP^r}(1)|_{C}=\det E$ and the embedding $C\subseteq \nP(\overline{V})$ is determined by the subspace $\overline{V}\subseteq H^0(C,\det E)$. Therefore  $$t_*M=\Omega^1_{\nP(\overline{V})}|_{C}.$$

We apply Lemma \ref{speciallb} to the curve $C\subseteq \nP(\overline{V})$. So there exists a general line bundle $A$ on $C$ of degree $d-r+n+g$ satisfying the properties:
\begin{eqnarray}\label{vanA}
& & h^1(C, A)=h^1(C, t_*M \otimes A) = h^1(C, \wedge^2 t_*M \otimes A)=0,\hspace{2cm}\\ \label{eq:55}
& & h^0(C, A)=d-r+n+1,  \\ \label{eq:56}
& & h^0(C, t_*M \otimes A) = (r-n)(d-r+n)+1.
\end{eqnarray}
Since $d-r+n+g\geq g$, we can further assume $A$ is effective, i.e.,
\begin{equation}\label{eq:11}
A= \sO_C(D) \text{ \quad where } D=\sum_{i=1}^{d-r+n+g} x_i, \text{ with distinct general points $x_i\in C$.}
\end{equation}

Consider the graded $S$-module
$$
R=\bigoplus_{m \geq 0} H^0(X, t^*A \otimes \sO_{X}(m))
$$
Note that $H^0(X, t^*A \otimes \sO_{X}(m))=0$, for $m<0$. We claim that the line bundle $t^*A$ is $1$-regular (with respect to $\sO_X(1)$). To see this, it is enough to verify $H^i(X,t^*A\otimes \sO_X(1-i))=0$ for $1\leq i\leq n$. Since $R^jt_*\sO_X(l)=0$ for $l\geq 1-n$ and $j\geq 1$,  by the projection formula we have $H^i(X,t^*A\otimes \sO_X(1-i))=H^i(C,A\otimes S^{1-i}E)$. The latter one is automatically zero if $i>1$ and for $i=1$, $H^1(C,A)=0$ by the choice of $A$. Hence $t^*A$ is indeed $1$-regular. So $R$ is $1$-regular too as a graded $S$-module. Thus a minimal free resolution of $R$ over $S$ should have the following shape
\begin{equation}\label{eq:05}
\begin{split}
0\longrightarrow S^{\alpha_{r-n}}(-r+n)\oplus S^{\beta_{r-n}}(-r+n-1)\longrightarrow\cdots \longrightarrow S^{\alpha_2}(-2)\oplus S^{\beta_2}(-3) \longrightarrow \hspace{1cm}\\
\longrightarrow S^{\alpha_1}(-1)\oplus S^{\beta_1}(-2)  \longrightarrow S^{\alpha_0}\oplus S^{\beta_0}(-1) \longrightarrow R \longrightarrow 0.
\end{split}
\end{equation}
Using Proposition \ref{koscoh} and properties (\ref{vanA})-(\ref{eq:56}), we can determine that $\beta_0=\beta_1=0$ and
$$\alpha_0=h^0(C,A)=d-r+n+1,$$
$$\alpha_1=h^0(C,t_*M\otimes A)=(r-n)(d-r+n)+1.$$
After sheafification, we then obtain a free resolution of $t^* A$
\begin{equation}\label{eq:03}
\cdots \longrightarrow\sO_{\nP^r}^{\alpha_2}(-2)\oplus\sO_{\nP^r}^{\beta_2}(-3) \longrightarrow \sO_{\P^r}^{(r-n)(d-r+n)+1}(-1) \longrightarrow \sO_{\nP^r}^{d-r+n+1}  \longrightarrow t^* A \longrightarrow 0.
\end{equation}

\bigskip

Recall that $A=\sO_C(D)$ for an effective divisor $D$. Denote by  $s_D\in H^0(C, A)$ the corresponding global section.  Then there is a short exact sequence
$$
0 \longrightarrow \sO_C \stackrel{\cdot s_D}{\longrightarrow} A \longrightarrow A_D \longrightarrow 0.
$$
Let $X_i := t^{-1}(x_i) \simeq \P^{n-1}$. Pulling back this sequence by $t$ gives  rise to a short exact sequence
$$
0 \longrightarrow \sO_{X} \stackrel{\cdot s_D}{\longrightarrow} t^*A \longrightarrow \sF=\bigoplus_{i=1}^{d-r+n+g} \sO_{X_i} \longrightarrow 0,
$$
where $\sF$ is a direct sum of $\sO_{X_i}$ as indicated.
Resolving each $\sO_{X_i}$ by the Koszul resolution, we obtain a free resolution of $\sF$ as follows:
\begin{equation}\label{eq:06}
\begin{split}
0\longrightarrow \sO_{\P^r}^{n_{r-n+1}}(-r+n-1)\longrightarrow\cdots  \hspace{8cm} \\
\longrightarrow \sO_{\P^r}^{n_2}(-2) \stackrel{w''}{\longrightarrow} \sO_{\P^r}^{(r-n+1)(d-r+n+g)}(-1) \stackrel{w'}{\longrightarrow} \sO_{\P^r}^{d-r+n+g} \stackrel{w}{\longrightarrow} \sF \longrightarrow 0.
\end{split}
\end{equation}
On the other hand by Snake lemma (see also \cite[Proof of Theorem 2.1]{GLP}), we obtain the following commutative diagram
\begin{equation}\label{eq:01}
\begin{split}
\xymatrix{
 & & & 0 \ar[d] & 0 \ar[d] & \\
 & 0 \ar[d] &  & \sO_{\P^r}  \ar[r] \ar[d] & \sO_X \ar[r] \ar[d]^-{\cdot s_D} & 0 \\
0 \ar[r] & K \ar[r] \ar[d] & \sO_{\P^r}^{(r-n)(d-r+n)+1}(-1) \ar[r]\ar@{=}[d] & \sO_{\P^r}^{d-r+n+1} \ar[r] \ar[d] & t^* A \ar[r] \ar[d] & 0\\
0 \ar[r] & N \ar[r] \ar[d] & \sO_{\P^r}^{(r-n)(d-r+n)+1}(-1) \ar[r]^-{v'} & \sO_{\P^r}^{d-r+n} \ar[r]^-{v} \ar[d] & \sF \ar[r] \ar[d] & 0\\
& \sI_X \ar[d] & & 0 & 0 & \\
& 0 &&&&
}
\end{split}
\end{equation}
in which all horizontal and vertical sequences are exact.
A truncation of the free resolution (\ref{eq:03}) induces a free resolution of the sheaf $K$, from which we
see that $H^1(\P^r, K(m))=H^2(\P^r, K(m))=0$ for all $m \in \Z$. Therefore from the left vertical sequence in the diagram (\ref{eq:01}), we get
\begin{equation}\label{eq:10}
H^1(\P^r, N(m))=H^1(\P^r, \sI_X(m)) \quad \mbox{ for all } m\in \nZ.
\end{equation}

\bigskip

In the diagram (\ref{eq:01}), the sheaf $\sF$ is also resolved by the exact sequence
$$0 \longrightarrow N \longrightarrow \sO_{\P^r}^{(r-n)(d-r+n)+1}(-1) \stackrel{v'}{\longrightarrow} \sO_{\P^r}^{d-r+n} \stackrel{v}{\longrightarrow} \sF \longrightarrow 0.$$
We point out that the kernel $N$ of $v'$ may not be locally free. We shall compare it with the free resolution (\ref{eq:06}) of $\sF$. Our goal is to lift the identity morphism $\id_{\sF}$ of $\sF$ consecutively to construct two injective morphisms $\phi_0$ and $\phi_1$ to achieve the following commutative diagram
\begin{equation}\label{eq:07}
\begin{split}
\xymatrix{
	0 \ar[r] & N \ar@{^{(}->}[d]^{\phi_2} \ar[r] & \sO_{\P^r}^{(r-n)(d-r+n)+1} (-1) \ar@{^{(}->}[d]^-{\phi_1} \ar[r]^-{v'} & \sO_{\P^r}^{d-r+n}  \ar@{^{(}->}[d]^-{\phi_0} \ar[r]^-v & \sF \ar[r] \ar@{=}[d]^-{\id_{\sF}} & 0\\
	0 \ar[r] & P  \ar[r] &  \sO_{\P^r}^{(r-n+1)(d-r+n+g)} (-1) \ar[r]^-{w'} & \sO_{\P^r}^{d-r+n+g}  \ar[r]^-w & \sF \ar[r] & 0
}
\end{split},
\end{equation}
where $P=\ker(w')$ and $\phi_2=\phi_1|_N$. For this, let us construct $\phi_0$ first. Set $\sV=\ker v$ and $\sW=\ker w$.
It is easy to check that $H^0(\nP^r, \sV)=H^0(\nP^r,\sW)=H^1(\nP^r,\sW)=0$. So we apply Lemma \ref{dialem} to get $\phi_0$ lifting $\id_{\sF}$. Next, we construct $\phi_1$. The restricted morphism $\phi_0|_{\sV}:\sV\longrightarrow \sW$ induces a morphism $\phi'': \sV(1)\longrightarrow \sW(1)$. Hence, we are in the situation
$$\xymatrix{
	0 \ar[r] & N(1)  \ar[r] & \sO_{\P^r}^{(r-n)(d-r+n)+1}  \ar[r]^-{v'} & \sV(1)  \ar@{^{(}->}[d]^-{\phi''}  &  \\
	0 \ar[r] & P(1)  \ar[r] &  \sO_{\P^r}^{(r-n+1)(d-r+n+g)} \ar[r]^-{w'} & \sW(1)   &
}$$
We can check that $H^0(\nP^r, N(1))=0$ and $H^0(\nP^r, P(1))=0$ by using (\ref{eq:01}) and (\ref{eq:06}). Then Lemma \ref{dialem} can be applied to lift $\phi''$ to obtain the desired $\phi_1$. Therefore, we obtain the commutative diagram (\ref{eq:07}) as claimed.

Immediately from the diagram (\ref{eq:07}), we have
$$\Coker \phi_0=\sO_{\P^r}^g\quad \text{and } \Coker \phi_1=\sO_{\P^r}^{d-r+n-1+g(r-n+1)}(-1),$$
and as a consequence of Snake lemma, we obtain a short exact sequence
\begin{equation}\label{eq:08}
0 \longrightarrow F \longrightarrow \sO_{\P^r}^{f+g}(-1) \longrightarrow \sO_{\P^r}^g \longrightarrow 0,
\end{equation}
where  $f=d-r+n-1+g(r-n)$ and $F=\Coker\phi_2$ which is a vector bundle of rank $f$ on $\P^r$.
Taking the dual of (\ref{eq:08}), we obtain a short exact sequence
\begin{equation}\label{eq:33}
0 \longrightarrow \sO_{\P^r}^g \longrightarrow \sO_{\P^r}^{f+g}(1) \longrightarrow F^* \longrightarrow 0.
\end{equation}
By Proposition \ref{regten}, $F^*$ is $(-1)$-regular. In addition, for each $i>0$, the short exact sequence (\ref{eq:33}) induces an exact complex
\begin{equation}\label{eq:12}
\begin{split}
\cdots \longrightarrow\wedge^2(\sO_{\P^r}^g)\otimes \Sym^{i-2}(\sO_{\P^r}^{f+g}(1))\longrightarrow\wedge^1(\sO_{\P^r}^g)\otimes \Sym^{i-1}(\sO_{\P^r}^{f+g}(1))\longrightarrow \hspace{2cm} \\
\longrightarrow \Sym^i(\sO_{\P^r}^{f+g}(1))\longrightarrow \Sym^i(F^*)\longrightarrow 0,
\end{split}
\end{equation}
which implies that $\Sym^i(F^*)$ is $(-i)$-regular.

Finally,  we form the following commutative diagram
\[
\xymatrix{
& & & 0 \ar[d] & \\
& 0 \ar[d] & & N \ar[d]^-{\phi_2} & \\
0 \ar[r] & \text{ker}(w'') \ar[d] \ar[r] & \sO_{\P^r}^{n_2}(-2) \ar@{=}[d] \ar[r]^-{w''} & P \ar[d] \ar[r] & 0\\
0 \ar[r] & \text{ker}(q) \ar[d] \ar[r] & \sO_{\P^r}^{n_2}(-2) \ar[r]^-{q} & F \ar[d] \ar[r] & 0 \\
& N \ar[d] & & 0 &\\
& 0
}
\]
where the morphism $q$ is induced by $w''$ composing with the projection from $P$ to $F$.
We compute a regularity bound for $N$ to finish the proof.
By a truncation of the resolution (\ref{eq:06}), it is easy to check that  $H^2(\nP^r,\text{ker}(w'')(m))=0$ for $m \in \Z$.
The Eagon-Northcott complex associated to the morphism $q$ (Lemma \ref{lm:10}) gives rise to a free resolution of $\ker(q)$
\begin{equation}\label{eq:09}
\cdots \longrightarrow \wedge^{f+2}(\sO_{\P^r}^{n_2}(-2)) \otimes \Sym^1(F^*) \otimes \det F^* \longrightarrow \wedge^{f+1}(\sO_{\P^r}^{n_2}(-2)) \otimes \det F^* \longrightarrow \text{ker}(q) \longrightarrow 0.
\end{equation}
Note that $\det F^* = \sO_{\P^r}(f+g)$. In addition, for $i\geq 0$,  the $i$-th term in the resolution (\ref{eq:09}) is of the form
 $$\wedge^{f+i+1}(\sO_{\P^r}^{n_2}(-2) ) \otimes \Sym^i(F^*) \otimes \det F^*$$
which is a direct sum of copies of $\Sym^i(F^*)(-f-2+g-2i)$.
Since we have seen that $\Sym^i(F^*)$ is $(-i)$-regular, we obtain that  $\wedge^{f+i+1}(\sO_{\P^r}^{n_2}(-2) ) \otimes \Sym^i(F^*) \otimes \det F^*$ is $(f+2-g+i)$-regular.
By Proposition \ref{regten}, we see that $\text{ker}(q)$ is $(f+2-g)$-regular. In particular, this implies that $H^1(\P^r, \text{ker}(q)(m))=0$ for all $m \geq f+1-g$. Thus $H^1(\P^r, N(m))=0$ for all $m \geq f+1-g$. Hence by (\ref{eq:10}), we deduce that
 $$X \subseteq \P^r \text{ is $m$-normal, for }m\geq f+1-g.$$
Recall that $f=d-r+n-1+g(r-n)=d-e+1+ge$.
Observe that a general hyperplane section of a scroll is again a scroll. Furthermore, we know the regularity bound for the curve case by \cite{GLP}. Hence inductively by Lemma \ref{reglem}, we obtain $$\reg(X) \leq f+2-g=d-e+1+g(e-1),$$
which finishes the proof of the theorem.
\end{proof}



\begin{remark}\label{bertin}
	The idea in the proof of Theorem \ref{main1} follows the one in Bertin's work \cite{B}, which goes back to the work of \cite{GLP}. We choose the same line bundle $A$ as in \cite{B} to deduce a locally free resolution of $t^*A$. However, in \cite[p.180]{B}, the sheafification of a minimal free resolution of the cokernel sheaf $\sF$ was claim to have the form
	\begin{equation}\label{eq:13}
		\cdots\longrightarrow \sO_{\nP^r}^{(r-n+1)(d+n-r)}(-1) \longrightarrow \sO_{\P^r}^{d-r+n} \longrightarrow \sF \longrightarrow 0,
	\end{equation}
	from which the conjectured regularity bound was proved. Unfortunately, the resolution  (\ref{eq:13}) only works when genus $g=0$. For arbitrary genus $g$, since $h^0(\nP^r, \sF)=d-r+n+g$, the complex (\ref{eq:13}) cannot be deduced from a minimal free resolution. The correct free resolution is (\ref{eq:06}) but the difference between $d-r+n$ and $d-r+n+g$ makes the proof complicated.
\end{remark}

\begin{remark}\label{rmk:01}
We can use the same argument to generalize Theorem \ref{main1} to singular case under certain conditions.  For the convenience of the reader, we formulate this direction here and leave the details. Assume that $C$ is a smooth projective curve of genus $g\geq 0$ and let $E$ be a vector bundle on $C$ of rank $n$ and degree $d$ (not necessarily ample). Assume that $V \subseteq H^0(\P(E), \sO_{\P(E)}(1))$ is a base-point-free subspace of dimension $r+1$ and it induces a birational morphism $p \colon \nP(E) \rightarrow \nP^r=\nP(V)$ to the image. This time we denote by $X$ the image of $p$. So we have the diagram as follows:
	\[
	\begin{split}
	\xymatrix{
		\P(E) \ar[d]_t \ar[r]^-p& X \subset \P^r=\nP(V)\\
		C &
	}
	\end{split}.
	\]
We regard $X$ as a generalized scroll. In order to get a regularity bound for $X$, we still use a general line bundle $A=\sO_C(D)$ for an effective divisor $D$ consisting of general $d-r+n+g$ points. The global section $s_D\in H^0(C, A)$ gives rise to a short exact sequence
$$
0 \longrightarrow \sO_C \stackrel{\cdot s_D}{\longrightarrow} A \longrightarrow A_D \longrightarrow 0.
$$
Under the following two conditions:
\begin{itemize}
	\item [(1)] $p_*\sO_{\nP(E)}=\sO_X$,
	\item [(2)] the induced morphism $R^1p_*(t^*\sO_C)\rightarrow R^1p_*(t^*A)$ is injective,
\end{itemize}
 the proof of Theorem \ref{main1} works without much change and we can obtain the same regularity bound. Note that the condition (2) is equivalent to
$$
H^1(\P(E), \sO_{\P(E)}(m))=H^1(C, \Sym^m(E))=0 \text{ for } m \gg 0,
$$
which may not hold in general if $E$ is not ample and $g>0$.
\end{remark}

\begin{remark}\label{rmk:02}
	\cite{GLP} and \cite{N1} show a different choice of the line bundle $A$ which gives a better regularity bound for curves. It is achieved by the key observation that a minimal free resolution of such line bundle $A$ has a good shape (see \cite[Lemma 2.3]{GLP} and \cite[Lemma 4]{N1}).
However, for the higher dimensional scroll cases, it seems that such choice of $A$ does not work.
For instance, consider an elliptic surface scroll $S \subset \P^4$ of degree 5. Then the choice of $A$ is nothing but the structure sheaf $A=\sO_S$.
It is expected that the sheafification of a minimal free resolution of $\sO_S$ is of the form
	$$
	\cdots \longrightarrow \sO_{\P^4}^{n_1}(-1) \longrightarrow \sO_{\P^4} \longrightarrow \sO_S \longrightarrow 0.
	$$
However, the actual sheafification of a minimal free resolution of $\sO_S$ is
	$$
	\cdots \longrightarrow \sO_{\P^4}^5(-2) \longrightarrow \sO_{\P^4} \longrightarrow \sO_S \longrightarrow 0.
	$$
\end{remark}

$ $

\end{document}